\crefname{section}{Section}{Sections}
\crefname{subsection}{\S}{\S\S}
\theoremstyle{plain}
\newtheorem{lemma}{Lemma}[section]
\newtheorem{proposition}[lemma]{Proposition}
\newtheorem{corollary}[lemma]{Corollary}
\newtheorem{theorem}[lemma]{Theorem}
\theoremstyle{nonumberplain}
\theoremstyle{plain}
\newtheorem{definition}[lemma]{Definition}
\newtheorem{remark}[lemma]{Remark}
\crefname{definition}{definition}{definitions}
\crefname{ex}{example}{examples}
\crefname{remark}{remark}{remarks}
\crefname{convention}{convention}{conventions}
\crefname{lemma}{lemma}{lemmas}
\crefname{proposition}{proposition}{propositions}
\crefname{corollary}{corollary}{corollaries}
\crefname{theorem}{theorem}{theorems}
\crefname{assumption}{assumption}{Assumptions}
\crefname{equation}{}{}
\theoremstyle{nonumberplain}
\newtheorem{proof}{Proof}
\newcommand\pf[1]{\newtheorem{#1}{Proof of \Cref{#1}}}
\newcommand\bC{{\mathbb C}}
\newcommand\bN{{\mathbb N}}
\newcommand\bZ{{\mathbb Z}}
\newcommand\cO{{\mathcal O}}
\DeclareMathOperator{\id}{id}
\newcommand\numberthis{\addtocounter{equation}{1}\tag{\theequation}}
\title{Hopf algebras with enough quotients}
\author{Alexandru Chirvasitu}
\begin{document}

\date{}

\newcommand{\Addresses}{{
  \bigskip
  \footnotesize

  \textsc{Department of Mathematics, University at Buffalo, Buffalo,
    NY 14260-2900, USA}\par\nopagebreak \textit{E-mail address}:
  \texttt{achirvas@buffalo.edu}

}}

\maketitle

\begin{abstract}
  A family of algebra maps $H\to A_i$ whose common domain is a Hopf algebra is said to be jointly inner faithful if it does not factor simultaneously through a proper Hopf quotient of $H$. We show that tensor and free products of jointly inner faithful maps of Hopf algebras are again jointly inner faithful, generalizing a number of results in the literature on torus generation of compact quantum groups.
\end{abstract}

\noindent {\em Key words: Hopf algebra; quantum group; inner faithful}

\vspace{.5cm}

\noindent{MSC 2010: 16T20; 16T05; 20G42}


\section*{Introduction}

Inner faithfulness has been a recurring theme in the quantum group literature:

\begin{definition}\label{def.if}
  An algebra morphism $H\to A$ from a Hopf algebra to an algebra is {\it inner faithful} if it does not factor through any proper Hopf quotients of $H$. 
\end{definition}

The intuition is that if $H$ is thought of as a ``quantum'' version of a group algebra $k\Gamma$ the group representation $k\Gamma\to A$ factors through no proper quotient groups of $\Gamma$, i.e. is faithful in the group-theoretic sense.

The concept plays a central role in \cite{bb-inner}, where the related notion of an {\it inner linear} Hopf algebra was introduced: a Hopf algebra having a finite-dimensional inner faithful representation (i.e. morphism to some matrix algebra $M_n$). More generally, the same paper introduces the {\it Hopf image} of a morphism $H\to A$, meaning the largest Hopf quotient of $H$ factoring said morphism.

Other sources where inner linearity/faithfulness appear in various guises include \cite{ban-subf,ban-mor,ban-nico,bbs} in the context of compact quantum groups, \cite{kv-vn,vae-out} in the broader setting of locally compact quantum groups, \cite{ab} for plain Hopf algebras, etc.

In the same circle of ideas, the notion of {\it topological generation} for compact quantum groups has seen some recent attention. Introduced in \cite{bcv} and perused extensively e.g. in \cite{ban-hl,ban-tor,ban-ax,ban-urfl} and (occasionally under different terminology) in \cite{chi-rfd,bw,bbcw,bcf} it is essentially a multi-map generalization of \Cref{def.if}:

A quantum group dual to a Hopf algebra $H$ is said to be topologically generated by the quantum subgroups corresponding to Hopf algebra morphisms $\pi_i:H\to A_i$ if the latter morphisms do not factor simultaneously through a proper Hopf quotient of $H$. Focusing on the Hopf algebras rather than their attached ``quantum groups'', we refer to this property of a family of (algebra or Hopf algebra) morphisms $H\to A_i$ as {\it joint inner faithfulness} (see \Cref{def.jnt} below).

The present paper is motivated by the interest in ``permanence'' results for joint inner faithfulness under various natural operations, such as tensor and free products. \cite[Proposition 4.5]{ban-tor} and \cite[Theorem 3.4]{chi-gen} are examples of such, and are recovered and generalized by the main results of this paper (see \Cref{pr.tens,pr.free} for more precise formulations):

\begin{theorem}
  A tensor or free product of two jointly inner faithful families of Hopf algebra morphisms is again jointly inner faithful.
\end{theorem}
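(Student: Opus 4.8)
The plan is to reduce the theorem to a statement about the Hopf image of a product map. Recall that for a family $\pi_i\colon H\to A_i$, joint inner faithfulness is equivalent to the assertion that the canonical map $H\to \prod_i A_i$ (or, more efficiently, the map into the Hopf image of the family) has Hopf image equal to $H$ itself. So for the tensor-product statement, suppose we have two jointly inner faithful families $\pi_i\colon H\to A_i$ and $\rho_j\colon K\to B_j$; we must show the family of composites $H\otimes K\to A_i\otimes B_j$ (tensoring each $\pi_i$ with each $\rho_j$, and using the Hopf algebra structure on $H\otimes K$) is jointly inner faithful. The natural first move is to let $L\subseteq H\otimes K$ be the Hopf image of this family — i.e.\ the smallest Hopf subalgebra (in the finite-dimensional or cosemisimple picture) or rather the Hopf quotient through which all the composites factor — and argue $L$ must be all of $H\otimes K$.

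The key technical step I would carry out is a ``one variable at a time'' argument. Fix $j$ and consider the maps $\pi_i\otimes\rho_j\colon H\otimes K\to A_i\otimes B_j$ for varying $i$; by restricting to $H\otimes 1$ and using a suitable ``slice'' or conditional-expectation-type map $A_i\otimes B_j\to A_i$ (for instance composing with $\id\otimes\varepsilon_{B_j}$ after noting $\rho_j$ intertwines counits, or working with a functional on $B_j$ that is faithful enough), one sees that the family $\{\pi_i\}$ factors through the Hopf quotient $H\otimes K\twoheadrightarrow (H\otimes K)/\!\!/L$ restricted appropriately, forcing the $H$-component of the coideal defining $L$ to be trivial; symmetrically for the $K$-component. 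The cleanest way to organize this is via the characterization of the Hopf image as the joint kernel of the coefficients, or dually via the fact (in the cosemisimple case) that the Hopf image is generated by the coefficients of the images of the universal corepresentations — so one computes that every matrix coefficient of $H$ and every matrix coefficient of $K$ survives in $L$, using joint inner faithfulness of each family separately, and then that products of such coefficients generate $H\otimes K$.

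For the free-product statement the strategy is parallel but uses that the free product $H*K$ (coproduct in the category of Hopf algebras) is generated as an algebra by the images of $H$ and $K$, and that a Hopf quotient of $H*K$ is controlled by its restrictions to these two subalgebras together with how it glues them. Given jointly inner faithful $\pi_i\colon H\to A_i$ and $\rho_j\colon K\to B_j$, one forms the maps $H*K\to A_i*B_j$ and lets $L$ be the Hopf image; restricting along $H\hookrightarrow H*K$ shows the family $\{\pi_i\}$ factors through $H\cap L$-type data, hence (by joint inner faithfulness on the $H$ side) $H\subseteq L$, similarly $K\subseteq L$, and since $H$ and $K$ generate $H*K$ as an algebra while $L$ is a subalgebra, $L=H*K$. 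The precise formulations \Cref{pr.tens,pr.free} presumably package exactly these restriction/generation lemmas.

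The main obstacle I anticipate is the restriction step: showing rigorously that joint inner faithfulness of $\{\pi_i\colon H\to A_i\}$ is inherited by the restricted/sliced family obtained from $\{\pi_i\otimes\rho_j\}$ or $\{\pi_i * \rho_j\}$. One must be careful that the ``slice'' map $A_i\otimes B_j\to A_i$ is an algebra map (it is, being $\id\otimes\phi$ for a character $\phi$ of $B_j$, and $B_j$ being a quotient algebra of a Hopf algebra $K$ has at least the counit available after passing to the Hopf image, or one uses $\varepsilon_K$ composed with $\rho_j$), and that composing the original $H\otimes K\to A_i\otimes B_j$ with it recovers $\pi_i$ precompsed with $H\otimes K\to H$. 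Handling the free-product case requires instead knowing that a Hopf subalgebra of $H*K$ containing $H$ and $K$ is everything, which follows from $H*K$ being generated by them as an algebra together with $L$ being closed under the algebra operations — so the delicate point is purely the inheritance of inner faithfulness under restriction, and once that lemma is isolated and proved, both cases follow quickly.
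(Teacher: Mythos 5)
There is a genuine gap, and it sits exactly at the point you yourself flag as delicate: the restriction/generation step. In both halves your argument runs: the Hopf image $L$ of the product family restricts on $H$ (resp.\ $K$) to a Hopf quotient through which $\{\pi_i\}$ (resp.\ $\{\rho_j\}$) factors, hence by joint inner faithfulness the quotient map $q\colon H*K\to L$ is injective on $H$ and on $K$; since $H$ and $K$ generate $H*K$ as an algebra, $q$ is injective. The last inference is false, because $L$ is a \emph{quotient} of $H*K$, not a subalgebra: an algebra map can be injective on each of two generating subalgebras without being injective. Concretely, for $H=K=k\Gamma$ the Hopf quotient $k(\Gamma*\Gamma)\to k(\Gamma\times\Gamma)$ is injective on both free factors yet proper whenever $\Gamma$ is nontrivial; for the tensor case, $k\bZ\otimes k\bZ\cong k[\bZ^2]\to k\bZ$ induced by $(a,b)\mapsto a+b$ is a proper Hopf quotient injective on both tensorands. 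So ``the $H$-component and the $K$-component of the defining Hopf ideal vanish'' does not force the ideal to vanish, and ``products of such coefficients generate'' is a statement about subalgebras that has no bite against a quotient. Your hedge between ``smallest Hopf subalgebra'' and ``the Hopf quotient'' when introducing $L$ is symptomatic: the two pictures are not interchangeable here, and the whole difficulty of the theorem lives in that difference.

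The paper's proof is engineered precisely to avoid this trap. Joint inner faithfulness is reformulated (\Cref{re.joint}) as the joint triviality of the kernels of the maps $\pi_{\bf i}=(\pi_{i_1}\otimes\cdots\otimes\pi_{i_k})\circ\Delta^{(|{\bf i}|)}$ indexed by words ${\bf i}\in L_I$; the counit-slice observation you make does appear there, but it is used to show these kernels form a filtered family (\Cref{le.order}), whence a \emph{single} word already separates any finite linearly independent set (\Cref{le.filter}). The tensor case then follows by padding two words to a common star-pattern so that $\pi^I_{\bf i}\otimes\pi^J_{\bf j}$ becomes a single $\pi^{I\times J}_{\bf ij}$, and the free case is reduced to the tensor case via the $k\bZ_2*H$ trick of \Cref{pr.z2} and \Cref{cor.hh}. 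As written, both halves of your proposal rest on the invalid generation step and would need to be replaced by an argument of this more quantitative kind.
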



The short \Cref{se.prel} mostly sets up some terminology and conventions while \Cref{se.main} contains the main results of the paper. Finally, in \Cref{se.other} explains how various results in the recent literature are recoverable from the present paper.

\subsection*{Acknowledgements}

This work is partially supported by NSF grant DMS-1801011.

\section{Preliminaries}\label{se.prel}

We work with Hopf algebras over a fixed but arbitrary field $k$ that will be implicit throughout. Although we focus on plain Hopf algebras for simplicity, the discussion goes through virtually unchanged for related categories, such as Hopf $*$-algebras over the complex numbers or the CQG algebras of \cite{dk}.

\begin{definition}\label{def.jnt}
  Let $H$ be a Hopf algebra and $\pi_i:H\to A_i$ a family of algebra morphisms.

  The family $\{\pi_i\}$ is {\it jointly inner faithful} (or jointly IF, or just IF) if the $\pi_i$ factor through no proper Hopf quotient $H\to \overline{H}$.

  $\{\pi\}$ is {\it jointly faithful} (or simply faithful) is the $\pi_i$ factor through no proper {\it algebra} quotient $H\to \overline{H}$.
\end{definition}

In some of the results below it will make a difference whether the maps $\pi_i:H\to A_i$ are plain algebra morphisms or Hopf algebra morphisms; we emphasize the distinction where appropriate. 

$H^{op}$ (and $H^{cop}$) denote $H$ equipped, respectively, with its opposite (co)multiplication. To merge the two opposite structures we use the symbol $H^{op,cop}$.

\section{Main results}\label{se.main}

The first result deals with tensor (rather than free) products.

\begin{theorem}\label{pr.tens}
  If
  \begin{equation*}
    \pi_i:H\to A_i,\ i\in I\quad\text{and}\quad \pi_j:K\to B_j,\ j\in J
  \end{equation*}
  are jointly IF families of Hopf algebra morphisms then so is
  \begin{equation*}
    \pi_i\otimes \pi_j:H\otimes K\to A_i\otimes B_j,\ (i,j)\in I\times J.
  \end{equation*}
\end{theorem}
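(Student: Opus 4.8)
The plan is to argue directly with Hopf ideals, bypassing Hopf images altogether. Unwinding \Cref{def.jnt}, a family of Hopf algebra morphisms out of a Hopf algebra is jointly IF exactly when the zero ideal is the only Hopf ideal of that algebra annihilated by every member of the family. So it suffices to fix a Hopf ideal $\mathfrak{a}$ of $H\otimes K$ with $(\pi_i\otimes\pi_j)(\mathfrak{a})=0$ for all $(i,j)\in I\times J$ and to prove that $\mathfrak{a}=0$.

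\emph{Step 1: feeding in the two hypotheses.} Since each $\pi_j$ is a morphism of Hopf algebras, $\epsilon_{B_j}\circ\pi_j=\epsilon_K$; applying $\mathrm{id}_{A_i}\otimes\epsilon_{B_j}$ to $(\pi_i\otimes\pi_j)(\mathfrak{a})=0$ therefore yields $\pi_i\big((\mathrm{id}_H\otimes\epsilon_K)(\mathfrak{a})\big)=0$ for every $i$. Now $\mathrm{id}_H\otimes\epsilon_K\colon H\otimes K\to H$ is a surjective morphism of Hopf algebras, so $(\mathrm{id}_H\otimes\epsilon_K)(\mathfrak{a})$ is a Hopf ideal of $H$; being killed by the jointly IF family $\{\pi_i\}_{i\in I}$, it vanishes. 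Hence $\mathfrak{a}\subseteq H\otimes\ker\epsilon_K$, and symmetrically (using joint inner faithfulness of $\{\pi_j\}_{j\in J}$) $\mathfrak{a}\subseteq\ker\epsilon_H\otimes K$.

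\emph{Step 2: the $H$-support of $\mathfrak{a}$ is a counitless subcoalgebra.} Let $C\subseteq H$ be the $H$-support of $\mathfrak{a}$, i.e.\ the smallest subspace with $\mathfrak{a}\subseteq C\otimes K$; concretely $C$ is spanned by the slices $(\mathrm{id}_H\otimes h)(x)$ for $x\in\mathfrak{a}$ and $h\in K^{*}$. The inclusion $\mathfrak{a}\subseteq\ker\epsilon_H\otimes K$ gives $\epsilon_H|_C=0$. To see that $C$ is a subcoalgebra, expand the coideal condition $\Delta_{H\otimes K}(\mathfrak{a})\subseteq\mathfrak{a}\otimes(H\otimes K)+(H\otimes K)\otimes\mathfrak{a}$ inside $(H\otimes K)^{\otimes 2}=H\otimes K\otimes H\otimes K$ and apply $\epsilon_K$ to the second, respectively the fourth, tensorand. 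By Step 1 one of the two summands is annihilated in each case, and what remains is precisely $(\Delta_H\otimes\mathrm{id}_K)(\mathfrak{a})\subseteq H\otimes\mathfrak{a}$ together with its mirror image in $\mathfrak{a}\otimes H$ (both regarded inside $H\otimes H\otimes K$). These two inclusions say that $\mathfrak{a}$, hence $C$, is stable under the two $H^{*}$-actions $p\mapsto(\ell\otimes\mathrm{id})\Delta_H(p)$ and $p\mapsto(\mathrm{id}\otimes\ell)\Delta_H(p)$ ($\ell\in H^{*}$) on the $H$-tensorand; in terms of $\Delta_H$ this reads $\Delta_H(C)\subseteq(H\otimes C)\cap(C\otimes H)=C\otimes C$. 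But a subcoalgebra on which $\epsilon_H$ vanishes is zero: any $c\in C$ equals $(\epsilon_H\otimes\mathrm{id})\Delta_H(c)$, which lies in $\epsilon_H(C)\cdot C=0$. Thus $C=0$ and $\mathfrak{a}\subseteq C\otimes K=0$, finishing the proof.

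The crux is Step 2, and really the idea of tracking the $H$-support $C$: the two joint inner faithfulness assumptions are each used exactly once, in Step 1, to kill the counit-collapsed slices of $\mathfrak{a}$, after which it is the coalgebra structure of $H\otimes K$ alone — no third appeal to inner faithfulness — that forces $C$ to be a subcoalgebra, whence the collapse is automatic. One should not hope for $C$ itself to be a Hopf ideal; the last step uses only the elementary fact about counitless subcoalgebras. This also clarifies why the naive route — reducing to a single surjective map $H\to\overline{H}$ on each side and trying to identify $H\otimes K$ with the Hopf image directly — does not suffice on its own: two commuting Hopf subalgebras isomorphic to $H$ and $K$ that generate their ambient algebra need not exhibit it as $H\otimes K$ (think of fibre products over a common quotient), so the comparison needs the extra leverage supplied by the counit collapse of Step 1. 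The remaining manipulations in Step 2 are routine Sweedler-notation bookkeeping.
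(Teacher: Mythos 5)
Your proof is correct, and it takes a genuinely different route from the paper's. The paper argues ``covariantly'': it encodes the joint IF property via the word-indexed maps $\pi_{\bf i}$ of \Cref{eq:3} (i.e.\ the Hopf-image description of \Cref{re.joint}), uses the filtered-intersection \Cref{le.filter} to find single words ${\bf i}\in L_I$, ${\bf j}\in L_J$ keeping the two linearly independent families of tensorands of $x$ independent, and then pads the two words so that they merge into a single word in $L_{I\times J}$. You instead argue ``contravariantly'' with Hopf ideals: every step checks out --- the identification of joint inner faithfulness with the vanishing of all Hopf ideals killed by the family, the fact that $(\id_H\otimes\epsilon_K)(\mathfrak{a})$ is a Hopf ideal of $H$ (surjectivity of $\id_H\otimes\epsilon_K$ is what makes this work), the two resulting inclusions $(\Delta_H\otimes\id_K)(\mathfrak{a})\subseteq H\otimes\mathfrak{a}$ and its mirror image, the identity $(H\otimes C)\cap(C\otimes H)=C\otimes C$ for subspaces of a vector space over a field, and the collapse of a counitless subcoalgebra. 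Note that you use the hypothesis that the targets are Hopf algebras (to have the counits $\epsilon_{B_j}$) in exactly the same place the paper does, consistent with \Cref{re.not-alg}. What each approach buys: yours is shorter and self-contained for the tensor product, avoiding the $L_I$ formalism and the Hopf-image characterization altogether; the paper's machinery, on the other hand, is built once and then reused for \Cref{pr.z2}, \Cref{pr.free} and the final proposition of \Cref{se.other}, where a counit-slicing argument of your kind is not available (in a free product there is no analogue of the projection $\id\otimes\epsilon$ splitting off one factor as a Hopf algebra map in the required way), so your method does not obviously extend to the free-product theorem even though it handles the tensor case more cleanly.
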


Next, we consider free products.

\begin{theorem}\label{pr.free}
  If
  \begin{equation*}
    \pi_i:H\to A_i,\ i\in I\quad\text{and}\quad \pi_j:K\to B_j,\ j\in J
  \end{equation*}
  are jointly IF families of Hopf algebra morphisms then so is
  \begin{equation*}
    \pi_i* \pi_j:H* K\to A_i* B_j,\ (i,j)\in I\times J.
  \end{equation*}
\end{theorem}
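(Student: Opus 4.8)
The plan is to reduce the statement to a question about Hopf quotients of $H*K$ and to exploit the universal property of the free product together with the known result for a single Hopf algebra. Suppose $H*K \to \overline{H*K}$ is a Hopf quotient through which every $\pi_i * \pi_j$ factors; I want to show it is an isomorphism. The first step is to understand the Hopf quotients of a free product. The two canonical inclusions $\iota_H : H \hookrightarrow H*K$ and $\iota_K : K \hookrightarrow H*K$ compose with the quotient map $q: H*K \to \overline{H*K}$ to give Hopf algebra maps $H \to \overline{H*K}$ and $K \to \overline{H*K}$; let $\overline H$ and $\overline K$ denote their Hopf images inside $\overline{H*K}$ (which exist by \cite{bb-inner}, as recalled in the introduction). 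By the universal property of the free product, the maps $H \to \overline H \hookrightarrow \overline{H*K}$ and $K \to \overline K \hookrightarrow \overline{H*K}$ assemble into a Hopf algebra map $\overline H * \overline K \to \overline{H*K}$, and since its image is a subalgebra containing the images of both $\iota_H$ and $\iota_K$ — hence all of $\overline{H*K}$, as these generate — this map is surjective. Moreover $q$ factors as $H*K \xrightarrow{p_H * p_K} \overline H * \overline K \to \overline{H*K}$, where $p_H: H \to \overline H$, $p_K: K \to \overline K$ are the Hopf image projections.

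The second step is to show that it suffices to prove $p_H$ and $p_K$ are isomorphisms. Indeed, if that holds, then $q$ becomes the surjection $\overline H * \overline K \cong H * K \to \overline{H*K}$; but to conclude this surjection is an isomorphism I still need an argument — a free product of Hopf algebras need not be "rigid" against arbitrary surjections. Here I would instead argue more directly: it is enough to show that the kernel of $q$ is zero, and for this I use that $q$ is injective on each factor (that is what $p_H, p_K$ being isomorphisms says) together with the coalgebra-theoretic structure of $H*K$. The cleanest route is the following: for a Hopf algebra $L$, a surjection $L \to \overline L$ is an isomorphism iff it is injective; and a Hopf algebra map out of a free product is injective iff it is injective on a set of coalgebra generators that also "separates" the free product structure — more concretely, one uses that $H*K$ is spanned by alternating products $x_1 y_1 x_2 y_2 \cdots$ with $x_\ell \in H$, $y_\ell \in K$, and one tracks linear independence through $q$ using that $q|_H$ and $q|_K$ are injective and that the target $\overline{H*K}$ receives a map from $\overline H * \overline K = H*K$. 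So the crux really is: \emph{each $\pi_i$ factors through $\overline H$ and each $\pi_j$ factors through $\overline K$}, whence joint inner faithfulness of the two original families forces $\overline H = H$ and $\overline K = K$.

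The third step verifies that factoring. Fix $i \in I$; I claim $\pi_i : H \to A_i$ factors through $\overline H \hookrightarrow \overline{H*K}$. Pick any $j \in J$ (assuming $J \neq \emptyset$; the edge cases $I = \emptyset$ or $J = \emptyset$ are trivial since then $H*K$ has no nontrivial constraints or one factor is absent). By hypothesis $\pi_i * \pi_j : H*K \to A_i * B_j$ factors through $q$, say $\pi_i * \pi_j = \psi_{ij} \circ q$ for a Hopf map $\psi_{ij}: \overline{H*K} \to A_i * B_j$. Restricting to $H$: $\pi_i = (\pi_i * \pi_j)|_H = \psi_{ij} \circ q \circ \iota_H = \psi_{ij}|_{\overline H \text{-part}} \circ (q\circ\iota_H)$. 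Since $q \circ \iota_H$ factors through its Hopf image $\overline H$ by the defining universal property of the Hopf image, we get that $\pi_i$ factors through $\overline H$. As $i$ was arbitrary, the whole family $\{\pi_i\}$ factors through the Hopf quotient $H \to \overline H$; joint inner faithfulness gives $\overline H = H$, i.e.\ $p_H$ is an isomorphism. Symmetrically $p_K$ is an isomorphism, and by Step 2 we conclude $q$ is an isomorphism, so $\{\pi_i * \pi_j\}$ is jointly IF.

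The main obstacle I anticipate is Step 2 — proving that the surjection $\overline H * \overline K \to \overline{H*K}$ is injective once we know injectivity on each tensor factor. Surjections of Hopf algebras are not monic in general, so some genuine structural input about free products is needed; the natural tool is a normal form / basis for the free product $H*K$ as a coalgebra (alternating tensor products over $k$), combined with the fact that a coalgebra surjection that is injective on a subcoalgebra generating set and compatible with the multiplicative structure must be injective. I would either cite an existing "freeness" lemma for Hopf algebra free products or prove the needed injectivity by a direct induction on the length of alternating words, using that $q$ is a coalgebra map and is injective on $H$ and on $K$. If that structural lemma turns out to be delicate, an alternative is to bypass $\overline{H*K}$ entirely and instead show directly that the Hopf ideal $\ker q \subseteq H*K$ must be zero by testing against the separating family of maps $\{\pi_i * \pi_j\}$ after first reducing $H$ and $K$ to their Hopf images — but this essentially recreates the same combinatorial content.
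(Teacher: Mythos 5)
Your Step 3 is sound: since the $\pi_i$ are Hopf algebra maps, the Hopf image of $q\circ\iota_H$ is simply the Hopf subalgebra $q(H)\subseteq\overline{H*K}$, each $\pi_i$ does factor through the Hopf quotient $p_H:H\to q(H)$, and joint inner faithfulness of $\{\pi_i\}$ forces $p_H$ to be an isomorphism; likewise for $K$. The fatal problem is Step 2, which you rightly flag as the crux but whose proposed resolution rests on a false statement: a surjective Hopf algebra map out of $H*K$ that is injective on $H$ and on $K$ need \emph{not} be injective. Take $H=K=k\bZ$, so that $H*K=k(\bZ*\bZ)=kF_2$; the abelianization $kF_2\to k\bZ^2$ is a surjective Hopf algebra map restricting to the identity on each free factor, yet it is far from injective. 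No normal-form induction on alternating words can close this gap, because the only facts your argument retains by that point --- injectivity of $q$ on the two factors --- are satisfied by the counterexample. (Your parenthetical appeal to ``the target $\overline{H*K}$ receives a map from $\overline H*\overline K=H*K$'' is circular: after the identification that map \emph{is} $q$.) What Steps 1 and 3 discard is precisely what is needed: that each $\pi_i*\pi_j$ lands in a \emph{free} product $A_i*B_j$, and, more importantly, the whole family of maps obtained from the $\pi_i*\pi_j$ by iterated comultiplication, which is what rules out collapses such as the abelianization.

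The paper supplies this missing combinatorial content along a different route. It first proves \Cref{le.filter}: for a jointly IF family of Hopf maps and any finite linearly independent set $F\subset H$ there is a word ${\bf i}$ with $\pi_{\bf i}(F)$ still linearly independent. It then handles $k\bZ_2*H$ directly (\Cref{pr.z2}) by a normal-form argument in which the group-like generator $t$ keeps the alternating words separated while \Cref{le.filter} keeps the $H$-letters independent; $H*H$ follows by realizing it inside $k\bZ_2*H$ as the subalgebra generated by $H$ and $\sigma H\sigma$ (\Cref{cor.hh}); and $H*K$ is finally reduced to that case via the embedding $H*K\subset(H\otimes K)*(H\otimes K)$ together with \Cref{pr.tens}. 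If you wish to keep your quotient-theoretic framing, you must still show that for every nonzero $x\in H*K$ some iterated-comultiplication map built from the $\pi_i*\pi_j$ fails to annihilate $x$; that is the content the paper's lemmas provide and your sketch does not.
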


This will require some auxiliary results.

\begin{proposition}\label{pr.z2}
    If
  \begin{equation*}
    \pi_i:H\to A_i,\ i\in I
  \end{equation*}
  is a jointly IF family of Hopf algebra morphisms then so is
  \begin{equation}\label{eq:1}
    \eta_i:=\id* \pi_i:k\bZ_2 * H\to k\bZ_2* A_i.
  \end{equation}
\end{proposition}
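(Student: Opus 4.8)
The plan is to show that if $\eta_i = \id * \pi_i$ factors through a proper Hopf quotient $q: k\bZ_2 * H \to \overline{L}$, then the $\pi_i$ already factor through a proper Hopf quotient of $H$, contradicting joint inner faithfulness. The key is to leverage the universal property of the free product together with the fact that $k\bZ_2$ is generated by a single group-like element $g$ with $g^2 = 1$.

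First I would observe that $k\bZ_2 * H$ is generated, as an algebra, by the single group-like $g$ together with the image of $H$, and the comultiplication is determined by $\Delta(g) = g\otimes g$ and the coalgebra structure of $H$. Let $J = \ker q$ be the (nonzero) Hopf ideal defining the proper quotient through which all $\eta_i$ factor. The Hopf algebra $L := k\bZ_2 * H$ carries a coaction-free but natural $\bZ_2$-grading: writing every element in terms of reduced words, the parity of the number of $g$'s appearing gives a coalgebra grading, and in fact $L$ is a crossed-product-like object. More usefully, there is a Hopf algebra automorphism $\theta$ of $L$ of order two: the one fixing $H$ and sending $g \mapsto g$ — that is trivial — so instead I would use the automorphism determined by $g \mapsto g$ and $h \mapsto ghg^{-1} = ghg$ on the $H$-copy; conjugation by $g$. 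Actually the cleaner device: $L \cong k\bZ_2 \ltimes (H * H^{g})$-type descriptions are delicate, so I would instead work directly.

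The central idea I would pursue: since $\id: k\bZ_2 \to k\bZ_2$ is (trivially) inner faithful and the family $\{\pi_i\}$ is jointly IF, consider the Hopf image of the combined family $\{\eta_i\}$, call it $\overline{L} = L/J$. Because each $\eta_i$ restricts on the $k\bZ_2$ summand to $\id$ followed by the inclusion $k\bZ_2 \hookrightarrow k\bZ_2 * A_i$, and $\id$ is inner faithful, the composite $k\bZ_2 \to L \to \overline{L}$ is injective — more precisely, the group-like $g$ maps to an element $\bar g \neq 1$ with $\bar g^2 = 1$ in $\overline{L}$. Then I would show $\overline{L}$ contains a sub-Hopf-algebra that is a quotient of $H$ through which all $\pi_i$ factor. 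To do this, note $\eta_i = (\text{incl}) \circ (\id * \pi_i)$, so $\eta_i$ restricted to the copy of $H$ in $L$ is $h \mapsto \pi_i(h) \in k\bZ_2 * A_i$; the Hopf subalgebra of $L$ generated by the copy of $H$ is $H$ itself (the free product inclusion $H \hookrightarrow k\bZ_2 * H$ is a split Hopf algebra embedding). Therefore the image of $H$ in $\overline{L}$ is a Hopf quotient $H \to \overline{H}$ through which every $\pi_i$ factors, and by joint inner faithfulness of $\{\pi_i\}$ this quotient $H \to \overline{H}$ must be an isomorphism. Symmetrically (or rather, trivially for the $k\bZ_2$ part), $k\bZ_2 \to \overline{L}$ is injective.

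The main obstacle — and the part requiring real care — is upgrading ``both $k\bZ_2$ and $H$ inject into $\overline{L}$'' to ``$L \to \overline{L}$ is injective,'' i.e. $J = 0$. This is the analogue of saying the free product of two jointly-IF-witnessing subalgebras recovers the whole free product, and it is exactly the kind of freeness/normal-form argument that will need the structure of $k\bZ_2 * H$ as an iterated construction. I would handle it by invoking (or re-deriving, in the simple $\bZ_2$ case) the fact that a Hopf ideal $J$ of a free product $k\bZ_2 * H$ that meets each factor trivially must be zero — this follows because the Hopf image construction is functorial and the free product $k\bZ_2 * H$ maps onto $\overline{L}$, which by the above contains commuting-generated copies of $k\bZ_2$ and of $H$ generating it, hence by the universal property factors $k\bZ_2 * H \twoheadrightarrow \overline{L}$ through an isomorphism. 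The subtlety is that ``generating'' must be checked at the Hopf-algebra (not merely algebra) level and that no relations beyond those in the two factors are forced; I expect this to be the crux, and it is presumably where a lemma of the form ``$k\bZ_2 * (-)$ preserves the property of a Hopf ideal being trivial on factors'' — or a direct normal-form argument in reduced words — is needed. Once that is in place, $J = 0$ contradicts the assumption that $\overline{L}$ was a \emph{proper} quotient, completing the proof.
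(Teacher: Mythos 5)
Your first two reductions are sound: from the assumption that all $\eta_i$ factor through a Hopf quotient $q:k\bZ_2*H\to\overline{L}$, you correctly deduce that $k\bZ_2\to\overline{L}$ is injective (since $\eta_i|_{k\bZ_2}$ is the injective inclusion $k\bZ_2\hookrightarrow k\bZ_2*A_i$) and that the image of $H$ in $\overline{L}$ is a Hopf quotient of $H$ factoring every $\pi_i$, hence that $H\to\overline{L}$ is injective by joint inner faithfulness. The fatal problem is the step you yourself flag as the crux and leave unproven: the claim that a Hopf ideal of $k\bZ_2*H$ meeting each free factor trivially must be zero is \emph{false}. Take $H=k\bZ_2$, so that $k\bZ_2*H=kD_\infty$ with $D_\infty=\bZ_2*\bZ_2$; the kernel of the surjection $kD_\infty\to k(\bZ_2\times\bZ_2)$ is a nonzero Hopf ideal restricting trivially to both free factors. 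Equally, your appeal to the universal property of the free product only produces a surjection $k\bZ_2*H\twoheadrightarrow\overline{L}$ (which you already had, namely $q$); it cannot certify that this surjection is an isomorphism, and the same example shows it need not be. So the reduction to ``both factors inject into $\overline{L}$'' discards exactly the information that makes the proposition true: one must use that the maps $\eta_i$ see arbitrary \emph{alternating words} in $t$ and $H$, not merely the two factors separately.

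For contrast, the paper's proof works element by element rather than with the quotient $\overline{L}$: given $0\ne x\in k\bZ_2*H$, it expands $x$ in alternating words $te_{\alpha_1}te_{\alpha_2}\cdots$ with respect to a basis $\{e_\alpha\}$ of $H$, uses \Cref{le.filter} to produce a word ${\bf i}\in L_I$ with $\pi_{\bf i}(e_\alpha)$ linearly independent, and then observes that $\eta_{\bf i}$ lands in a subalgebra of $(k\bZ_2*A_{i_1})\otimes\cdots\otimes(k\bZ_2*A_{i_k})$ isomorphic to $k\bZ_2*A_{\bf i}$ (generated by $A_{\bf i}$ and the diagonal copy of $k\bZ_2$), where the free-product normal form forces $\eta_{\bf i}(x)\ne 0$; by \Cref{re.joint} this joint faithfulness of the $\eta_{\bf i}$ is exactly joint inner faithfulness of the $\eta_i$. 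Some normal-form argument of this kind, applied to the whole family indexed by words, is unavoidable; your sketch as it stands cannot be completed.
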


\begin{remark}\label{re.not-alg}
  Note that in \Cref{pr.tens}, \Cref{pr.free} and so on we are considering families of morphisms of {\it Hopf} algebras (rather than just algebras).
\end{remark}

Before going into any of the proofs we need some notation. For an index set $I$ denote by $L_I$ (standing for the {\it language} attached to $I$) the set of words ${\bf i}$ in letters $i\in I$ and {\it adjoint copies} $i^*$ for $i\in I$. We equip $L_I$ with the involution `$*$' interchanging $i\in I$ and their adjoint copies $i^*$ and reversing juxtaposition. For such a word ${\bf i}\in L_I$ we write $|{\bf i}|$ for its {\it length} (i.e. number of letters).

Now, for ${\bf i}=i_1\cdots i_k\in L_I$ denote
\begin{equation}\label{eq:9}
  A_{\bf i}:= A_{i_1}\otimes \cdots\otimes A_{i_k}
\end{equation}
where
\begin{equation*}
  A_{i}=
  \begin{cases}
    A_{i}&\text{if }i\in I\numberthis\label{eq:ai-cases}\\
    A_{i^*}^{op}&\text{if }i^*\in I.
  \end{cases}
\end{equation*}

For $i\in I$ we write
\begin{equation*}
 \begin{tikzpicture}[auto,baseline=(current  bounding  box.center)]
  \path[anchor=base] (0,0) node (1) {$H$} +(2,.5) node (2) {$H^{op,cop}$} +(5,.5) node (3) {$A_i^{op}$} +(7,0) node (4) {$A_{i^*}$};
  \draw[->] (1) to[bend left=6] node[pos=.5,auto] {$\scriptstyle S$} (2);
  \draw[->] (2) to[bend left=6] node[pos=.5,auto] {$\scriptstyle \pi_i^{op}$} (3);
  \draw[->] (3) to[bend left=6] node[pos=.5,auto] {$\scriptstyle =$} (4);
  \draw[->] (1) to[bend right=6] node[pos=.5,auto,swap] {$\scriptstyle \pi_{i^*}$} (4);
 \end{tikzpicture}
\end{equation*}
where $S$ is the antipode.

Finally, for a word
\begin{equation}\label{eq:2}
 {\bf i}=i_1\cdots i_k\in L_I 
\end{equation}
define the morphism
\begin{equation}\label{eq:3}
  \pi_{\bf i}:H\to A_{\bf i}
\end{equation}
by
\begin{equation*}
  \begin{tikzpicture}[auto,baseline=(current  bounding  box.center)]
    \path[anchor=base] (0,0) node (1) {$H$} +(3,.5) node (2) {$H^{\otimes |{\bf i}|}$} +(8,0) node (3) {$A_{\bf i}$};
    \draw[->] (1) to[bend left=6] node[pos=.5,auto] {$\scriptstyle \Delta^{(|{\bf i}|)}$} (2);
    \draw[->] (2) to[bend left=6] node[pos=.3,auto] {$\scriptstyle \pi_{i_1}\otimes\cdots \otimes \pi_{i_k}$} (3);
    \draw[->] (1) to[bend right=6] node[pos=.5,auto,swap] {$\scriptstyle \pi_{\bf i}$} (3);
 \end{tikzpicture}
\end{equation*}
where
\begin{equation*}
  \Delta^{(n)}:H\to H^{\otimes n}
\end{equation*}
denotes the obvious iterated comultiplication. The same construction applies to the maps $\eta_i$ of \Cref{eq:1} to give morphisms
\begin{equation*}
  \eta_{\bf i}:k\bZ_2*H \to(k\bZ_2*A_{i_1})\otimes\cdots\otimes (k\bZ_2*A_{i_k}) 
\end{equation*}
for \Cref{eq:2}. When having to refer back explicitly to the index set $I$ (as will be the case below in the proof of \Cref{pr.tens}) we write $\pi^I_{\bf i}$ for $\pi_{\bf i}$.

As a final piece of notation, we apply the notation \Cref{eq:ai-cases} to $H$ itself: 
\begin{equation*}
  H_{i}=
  \begin{cases}
    H_{i}&\text{if }i\in I\numberthis\label{eq:h-cases}\\
    H_{i^*}^{op}&\text{if }i^*\in I.
  \end{cases}
\end{equation*}
This also allows us to write $H_{\bf i}$ as in \Cref{eq:9} for
\begin{equation*}
  {\bf i}=i_1\cdots i_k\in L_I
\end{equation*}
and
\begin{equation}\label{eq:10}
  \Delta^{({\bf i})}:H\to H_{\bf i}
\end{equation}
for $\Delta^{(|{\bf i}|)}$ composed with antipodes applied to those tensorands in
\begin{equation*}
  H_{\bf i} = H_{i_1}\otimes \cdots \otimes H_{i_k}
\end{equation*}
corresponding to $i_s\in I^*$.

\begin{remark}\label{re.bb-strings}
  Similar formalism is introduced in \cite[\S 2.1]{bb-inner}, which considers single maps $H\to A$ (i.e. $I=\{\alpha\}$ would be a singleton). In that setup our $L_I$ can be identified with the free involutive monoid generated by the single element $\alpha$ (with multiplication-reversing involution defined by $\alpha\mapsto \alpha^*$). Our $A_{\bf i}$ are instead parametrized in \cite{bb-inner} by the free monoid $F$ generated by $\bN$ (with involution interchanging the even and odd non-negative integers): the parametrization has some redundancy, with $F$ mapping surjectively (but not injectively) onto $L_I$ by sending even elements to $\alpha$ and odd ones to $\alpha^*$.
\end{remark}

\begin{remark}\label{re.joint}
  The joint IF property means precisely that as ${\bf i}$ ranges over $L_I$, the maps $\pi_{\bf i}$ in \Cref{eq:3} are jointly faithful: their kernels intersect to zero.

  For Hopf algebra morphism families $\pi:H\to A_i$ we can say more however. Consider a morphism $\pi_{\bf i}$ as in \Cref{eq:3} and a word ${\bf j}$ formed by inserting any letter $j$ in $I\cup I^*$ anywhere in ${\bf i}$:
  \begin{equation*}
    {\bf i} = {\bf i}_l{\bf i}_r\text{ and }{\bf j} = {\bf i}_l\ j\ {\bf i}_r. 
  \end{equation*}
Composing $\pi_{\bf j}$ with
\begin{equation*}
  \id^{\otimes |{\bf i}_l|}\otimes \varepsilon_{A_j} \otimes  \id^{\otimes |{\bf i}_r|} : A_{\bf j}\to A_{\bf i}
\end{equation*}
produces $\pi_{\bf i}$, so the kernel of the latter must contain $\ker \pi_{\bf j}$. 
  

This provides us with \Cref{le.order} below. Moreover, we apply this same observation to enlarge some of the words ${\bf i}$ occurring in the proof of \Cref{pr.tens} below.

Note that for this argument to go through, it is crucial to be working with Hopf algebra maps rather than plain algebra morphisms.
\end{remark}

For the purpose of stating \Cref{le.order} we regard $L_I$ as a poset with order `$\le$' defined by declaring ${\bf j}\le {\bf i}$ if ${\bf j}$ can be obtained from ${\bf i}$ by eliminating some letters (and leaving the leftovers in the same order). 

\begin{lemma}\label{le.order}
  Let $\pi_i:H\to A_i$ be a family of Hopf algebra maps. Then, the correspondence
  \begin{equation*}
    L_I\ni {\bf i}\mapsto \ker \pi_{\bf i}
  \end{equation*}
  is order-reversing, where the lattice of subspaces of $H$ is ordered by inclusion and $L_I$ as in the discussion preceding the statement. 
\end{lemma}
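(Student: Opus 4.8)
The plan is to bootstrap the single-letter observation already recorded in \Cref{re.joint} by an induction on word length. Recall that that remark isolates the following: given ${\bf i} = {\bf i}_l{\bf i}_r\in L_I$ and a letter $j\in I\cup I^*$, the longer word ${\bf j} := {\bf i}_l\,j\,{\bf i}_r$ satisfies $\ker\pi_{\bf j}\subseteq\ker\pi_{\bf i}$, because $\pi_{\bf i}$ is recovered from $\pi_{\bf j}$ by post-composition with $\id^{\otimes|{\bf i}_l|}\otimes\varepsilon_{A_j}\otimes\id^{\otimes|{\bf i}_r|}\colon A_{\bf j}\to A_{\bf i}$. I would make that identity precise as follows. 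Writing $a:=|{\bf i}_l|$ and $b:=|{\bf i}_r|$, expand $\pi_{\bf j}$ as $(\pi_{i_1}\otimes\cdots\otimes\pi_{i_a}\otimes\pi_j\otimes\pi_{i_{a+1}}\otimes\cdots\otimes\pi_{i_{a+b}})\circ\Delta^{(a+1+b)}$ following \Cref{eq:3}; applying $\id^{\otimes a}\otimes\varepsilon_{A_j}\otimes\id^{\otimes b}$ turns the middle tensorand into $\varepsilon_{A_j}\circ\pi_j$, and the counit axiom $(\id^{\otimes a}\otimes\varepsilon_H\otimes\id^{\otimes b})\circ\Delta^{(a+1+b)} = \Delta^{(a+b)}$ then collapses the result back to $\pi_{\bf i}$ --- provided one knows that $\varepsilon_{A_j}\circ\pi_j=\varepsilon_H$.

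The only nontrivial point is therefore the identity $\varepsilon_{A_j}\circ\pi_j=\varepsilon_H$ for every letter $j$, where for $j=i^*\in I^*$ the counit $\varepsilon_{A_{i^*}}$ is read as the counit $\varepsilon_{A_i}$ of $A_i$ (which is unchanged upon passing to the opposite algebra $A_i^{op}=A_{i^*}$). For $j\in I$ this is simply the fact that the Hopf algebra morphism $\pi_j$ is in particular a morphism of coalgebras. For $j=i^*$ one uses that $\pi_{i^*}$ is, by construction, the composite $\pi_i^{op}\circ S$, so that $\varepsilon_{A_i}\circ\pi_{i^*}=\varepsilon_{A_i}\circ\pi_i^{op}\circ S=\varepsilon_H\circ S=\varepsilon_H$, invoking the standard Hopf-algebra identity $\varepsilon_H\circ S=\varepsilon_H$. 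This is exactly the place where the hypothesis that the $\pi_i$ are \emph{Hopf} algebra morphisms --- rather than merely algebra morphisms --- enters, as flagged at the end of \Cref{re.joint}.

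Finally I would feed this into the order statement. If ${\bf j}\le{\bf i}$ in the poset $L_I$, then by definition ${\bf i}$ arises from ${\bf j}$ by inserting finitely many letters one at a time, giving a chain ${\bf j}={\bf j}^{(0)},{\bf j}^{(1)},\dots,{\bf j}^{(m)}={\bf i}$ with $m=|{\bf i}|-|{\bf j}|$ in which each ${\bf j}^{(t+1)}$ is obtained from ${\bf j}^{(t)}$ by a single insertion. The single-letter case gives $\ker\pi_{{\bf j}^{(t+1)}}\subseteq\ker\pi_{{\bf j}^{(t)}}$ for each $t$, and composing these inclusions yields $\ker\pi_{\bf i}\subseteq\ker\pi_{\bf j}$, which is precisely the asserted order-reversal. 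I do not anticipate any real obstacle: beyond the identity $\varepsilon_{A_j}\circ\pi_j=\varepsilon_H$ discussed above, everything is bookkeeping, the main care being the consistent handling of the adjoint letters $i^*$ together with their associated opposite algebras and antipodes.
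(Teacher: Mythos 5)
Your proposal is correct and follows exactly the paper's own route: the paper's proof of \Cref{le.order} simply cites \Cref{re.joint}, whose single-letter insertion argument (collapsing the extra tensorand via $\varepsilon_{A_j}\circ\pi_j=\varepsilon_H$, with the $j=i^*$ case handled by $\varepsilon_H\circ S=\varepsilon_H$) is precisely what you spell out, and the induction on insertions is the implicit bookkeeping. You have merely made explicit the details the paper leaves to the reader.
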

\begin{proof}
  This follows from \Cref{re.joint}, which argues that inserting letters arbitrarily into a word ${\bf i}$ can only shrink $\ker \pi_{\bf i}$.
\end{proof}

\begin{lemma}\label{le.filter}
  Let $\pi_i:H\to A_i$ be a jointly IF family of Hopf algebra maps. Then, for every finite linearly independent set $F\subset H$, there is some ${\bf i}\in L_I$ such that
  \begin{equation*}
    \pi_{\bf i}(F)\subset A_{\bf i}
  \end{equation*}
  is linearly independent. 
\end{lemma}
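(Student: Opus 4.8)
The plan is to translate the conclusion into a statement about a single finite-dimensional subspace. Set $V:=\operatorname{span}_k F\subseteq H$, so that $\dim_k V=|F|$ since $F$ is linearly independent. Then $\pi_{\bf i}(F)$ is linearly independent precisely when $\pi_{\bf i}$ restricts to an injection on $V$, i.e. when $\ker\pi_{\bf i}\cap V=0$. It thus suffices to exhibit one ${\bf i}\in L_I$ with $\ker\pi_{\bf i}\cap V=0$.

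Two inputs make this possible. First, the joint IF hypothesis, read through \Cref{re.joint}, says that $\bigcap_{{\bf i}\in L_I}\ker\pi_{\bf i}=0$, hence also $\bigcap_{{\bf i}\in L_I}(\ker\pi_{\bf i}\cap V)=0$. Second, the family $\{\ker\pi_{\bf i}\cap V\}_{{\bf i}\in L_I}$ is downward directed under inclusion: given ${\bf i},{\bf j}\in L_I$, the concatenation ${\bf k}:={\bf i}{\bf j}$ has both ${\bf i}$ and ${\bf j}$ as subwords, so ${\bf i}\le{\bf k}$ and ${\bf j}\le{\bf k}$ in $L_I$, and \Cref{le.order} gives $\ker\pi_{\bf k}\subseteq\ker\pi_{\bf i}\cap\ker\pi_{\bf j}$, whence $\ker\pi_{\bf k}\cap V\subseteq(\ker\pi_{\bf i}\cap V)\cap(\ker\pi_{\bf j}\cap V)$.

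Now I would run the usual stabilization argument inside the finite-dimensional space $V$: choose ${\bf i}_0$ minimizing $\dim_k(\ker\pi_{\bf i}\cap V)$ over ${\bf i}\in L_I$ — legitimate because these dimensions form a nonempty set of integers in $[0,\dim_k V]$. For an arbitrary ${\bf j}$, directedness yields ${\bf k}$ with $\ker\pi_{\bf k}\cap V$ contained in both $\ker\pi_{{\bf i}_0}\cap V$ and $\ker\pi_{\bf j}\cap V$; minimality then forces $\ker\pi_{\bf k}\cap V=\ker\pi_{{\bf i}_0}\cap V$, so $\ker\pi_{{\bf i}_0}\cap V\subseteq\ker\pi_{\bf j}\cap V$. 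As ${\bf j}$ ranges over $L_I$ this gives $\ker\pi_{{\bf i}_0}\cap V\subseteq\bigcap_{\bf j}(\ker\pi_{\bf j}\cap V)=0$, and ${\bf i}_0$ is the desired word.

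I do not foresee a serious obstacle here; the only point requiring care is that \Cref{le.order}, and hence the directedness of the kernel family via concatenation of words, genuinely uses that the $\pi_i$ are \emph{Hopf}-algebra maps (cf. the closing remark of \Cref{re.joint}), so the hypothesis cannot be relaxed to plain algebra morphisms. Alternatively, one can dispense with the explicit minimizer and simply invoke the general fact that a downward-directed family of subspaces of a finite-dimensional vector space whose intersection is $0$ must contain $0$ itself.
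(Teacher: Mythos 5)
Your argument is correct and follows the paper's own proof essentially verbatim: pass to $V=\operatorname{span}F$, use joint inner faithfulness to get $\bigcap_{\bf i}(\ker\pi_{\bf i}\cap V)=0$, use \Cref{le.order} (via concatenation of words) to see the family is downward directed, and conclude by finite-dimensionality that some member is already zero. The only difference is that you spell out the stabilization/minimal-dimension step that the paper compresses into one sentence, which is a harmless elaboration.
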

\begin{proof}
  Denote by $V$ the span of $F$. We know that the intersection
  \begin{equation}\label{eq:6}
    \bigcap_{\bf i}(V\cap \ker \pi_{\bf i})
  \end{equation}
  is trivial. Moreover, we know from \Cref{le.order} that said intersection is {\it filtered}: every finite family of spaces $V\cap \ker\pi_{{\bf i}_t}$ contains another such space $V\cap \ker \pi_{\bf i}$ for some word ${\bf i}$ that dominates all ${\bf i}_t$ in the order we have equipped $L_I$ with. Since all mentioned spaces are finite-dimensional, the vanishing of their filtered intersection \Cref{eq:6} implies that one of them must be trivial.
\end{proof}

\pf{pr.tens}
\begin{pr.tens}
  Consider a non-zero element $x\in H\otimes K$. It is expandable as a linear combination
  \begin{equation*}
    x=\sum_{\alpha,\beta} c_{\alpha\beta}f_{\alpha}\otimes g_{\beta}
  \end{equation*}
  of simple tensors $f_\alpha\otimes g_\beta$ for finite linearly independent set $\{f_{\alpha}\}\subset H$ and $\{g_{\beta}\}\subset K$.

  By \Cref{le.filter} there are tuples ${\bf i}\in L_I$ and ${\bf j}\in L_J$ such that the images of $\{f_{\alpha}\}$ and $\{g_{\beta}\}$ through $\pi^I_{\bf i}$ and $\pi^J_{\bf j}$ respectively are linearly independent, and hence $x$ is not annihilated by the map
  \begin{equation}\label{eq:4}
    \pi^I_{\bf i}\otimes \pi^J_{\bf j}:H\otimes K\to A_{\bf i}\otimes B_{\bf j}. 
  \end{equation}

  Enlarging one of ${\bf i}$ and ${\bf j}$ if necessary via the procedure described in \Cref{re.joint}, we can assume that they have the same number of components:
  \begin{equation*}
    {\bf i}=i_1\cdots i_k\text{ and }{\bf j}=j_1\cdots j_k. 
  \end{equation*}
  Furthermore, by enlarging both words as appropriate we can ensure that for each index $1\le u\le k$ we have either
  \begin{equation*}
    (i_u,j_u)\in I\times J\text{ or } (i_u,j_u)\in I^*\times J^*.
  \end{equation*}
We thus have
  \begin{equation}\label{eq:7}
    {\bf ij}:=(i_1,j_1)\cdots(i_k,j_k)\in L_{I\times J}
  \end{equation}
  and in this case the map \Cref{eq:4} is nothing but
  \begin{equation}\label{eq:8}
    \pi_{\bf ij}^{I\times J}:H\otimes K\to (A_{i_1}\otimes B_{j_1})\otimes\cdots\otimes (A_{i_k}\otimes B_{j_k}).
  \end{equation}
  Since it fails to annihilate the arbitrary element $0\ne x\in H\otimes K$, the conclusion follows. 
\end{pr.tens}


\pf{pr.z2}
\begin{pr.z2}
  Let $x\in k\bZ_2 * H$ be a non-zero element and $t\in \bZ_2$ the generator of that group. Then, having fixed a basis $\{e_{\alpha}\}$ for $H$, $x$ can be written as a linear combination of words of the form
  \begin{equation*}
    te_{\alpha_1}te_{\alpha_2}\cdots
  \end{equation*}
  and analogues (i.e. the starting $t$ might be absent, the last letter might be a $t$ or an $e_{\alpha}$, etc.).

  Due to our assumption that the family $\{\pi_i\}$ is jointly IF and \Cref{le.filter} there is some ${\bf i}$ as in \Cref{eq:2} such that the images through $\pi_{\bf i}$ of the $e_{\alpha}$ appearing in the decomposition of $x$ are linearly independent. The morphism
  \begin{equation*}
    \eta_{\bf i}:k\bZ_2*H \to (k\bZ_2*A_{i_1})\otimes\cdots\otimes (k\bZ_2*A_{i_k})  
  \end{equation*}
  corresponding to ${\bf i}$ factors through the subalgebra $A$ of the codomain generated by
  \begin{equation*}
    A_{\bf i} = A_{i_1}\otimes\cdots\otimes A_{i_k}
  \end{equation*}
  and the diagonally-embedded
  \begin{equation*}
    \Delta^{(k)}:k\bZ_2 \to k\bZ_2^{\otimes k}\subset (k\bZ_2*A_{i_1})\otimes\cdots\otimes (k\bZ_2*A_{i_k}).
  \end{equation*}
  Since $A\cong k\bZ_2 * A_{\bf i}$, the choice of ${\bf i}$ (making the images of the $e_{\alpha}$ appearing in $x$ linearly independent) ensures that $\eta_{\bf i}$ does not annihilate $x$. The non-zero element $x\in k\bZ_2 * H$ being arbitrary, this finishes the proof.
\end{pr.z2}

\begin{corollary}\label{cor.hh}
    If
  \begin{equation*}
    \pi_i:H\to A_i,\ i\in I
  \end{equation*}
  is a jointly IF family of Hopf algebra morphisms then so is
  \begin{equation*}
    \pi_i* \pi_j:H * H\to A_i* A_j,\ i,j\in I.
  \end{equation*}  
\end{corollary}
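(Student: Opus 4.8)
The plan is to deduce the corollary from \Cref{pr.z2} through the standard embedding of $H*H$ into $k\bZ_2*H$. First, a reduction: the family $\{\pi_i*\pi_j\}_{(i,j)\in I\times I}$ contains the diagonal subfamily $\{\pi_i*\pi_i\}_{i\in I}$, and any family containing a jointly IF subfamily is itself jointly IF — a proper Hopf quotient of $H*H$ factoring the whole family would in particular factor the subfamily. So it suffices to prove that $\{\pi_i*\pi_i\}_{i\in I}$ is jointly IF.

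The structural input I would use is that, writing $t$ for the generator of $\bZ_2$, the subalgebra $B\subseteq k\bZ_2*H$ generated by $H$ and $tHt$ is a Hopf subalgebra (it is closed under $\Delta$, $\varepsilon$, $S$ since $t$ is grouplike with $S(t)=t$), it is isomorphic as a Hopf algebra to $H*H$ via $h\mapsto h$ on one copy and $h\mapsto tht$ on the other, and every element of $k\bZ_2*H$ is uniquely of the form $b_0+b_1 t$ with $b_0,b_1\in B$; equivalently $k\bZ_2*H\cong (H*H)\rtimes k\bZ_2$ for the $\bZ_2$-action on $H*H$ swapping the two tensorands. The same applies verbatim with $A_i$ in place of $H$, giving a Hopf subalgebra $B_i\cong A_i*A_i$ of $k\bZ_2*A_i$, and under these identifications the morphism $\eta_i=\id*\pi_i$ of \Cref{eq:1} restricts on $B$ to $\pi_i*\pi_i:H*H\to A_i*A_i$ and sends $t$ to $t$.

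Now \Cref{pr.z2} gives that $\{\eta_i\}_{i\in I}$ is jointly IF, and I would descend this to $\{\pi_i*\pi_i\}_{i\in I}$ as follows. Suppose the latter were not jointly IF, and let $J\subseteq H*H$ be the largest Hopf ideal annihilated by every $\pi_i*\pi_i$, so $J\ne 0$. Because $J$ is canonically attached to the family and each $\pi_i*\pi_i$ intertwines the swap automorphisms of its source and target, $J$ is swap-stable; hence, viewed inside $B\subseteq k\bZ_2*H$, the subspace $J\oplus Jt$ is a two-sided ideal (using $tb=(\text{swap of }b)\,t$), a coideal, and $S$-stable, i.e.\ a Hopf ideal of $k\bZ_2*H$. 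It is non-zero, hence proper, and $\eta_i(J)=(\pi_i*\pi_i)(J)=0$ together with $\eta_i(Jt)=\eta_i(J)\,\eta_i(t)=0$ shows it is annihilated by every $\eta_i$. This contradicts the joint inner faithfulness of $\{\eta_i\}$, and the corollary follows.

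The only step that is more than pure bookkeeping is the descent, and there the point deserving care is precisely that $J$ is swap-stable and that $J\oplus Jt$ really is a Hopf ideal; both are routine once the decomposition $k\bZ_2*H=B\oplus Bt$ and the commutation relation between $t$ and $B$ are in hand. I therefore do not anticipate a genuine obstacle.
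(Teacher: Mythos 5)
Your proof is correct and follows essentially the same route as the paper: both identify $H*H$ with the Hopf subalgebra of $k\bZ_2*H$ generated by $H$ and $tHt$ and then invoke \Cref{pr.z2}. The only difference is the descent step, which the paper dispatches by simply restricting the jointly IF family $\id*\pi_i$ to this Hopf subalgebra (restriction to a Hopf subalgebra preserves joint inner faithfulness, as is immediate from the kernel characterization in \Cref{re.joint}), whereas you re-derive this in the case at hand via the swap-stable Hopf ideal $J\oplus Jt$ --- a valid argument, just more work than necessary.
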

\begin{proof}
  Let $\sigma\in \bZ_2$ be the generator. The subalgebra of $k\bZ_2 * H$ generated by $H$ and $\sigma H \sigma$ is isomorphic to $H*H$ upon identifying
  \begin{equation*}
    H\ni x\mapsto \sigma x\sigma\in \sigma H\sigma.
  \end{equation*}
  The conclusion now follows from \Cref{pr.z2} by restricting the jointly IF family $\id *\pi_i$ to
  \begin{equation*}
    H*H\cong H*\sigma H\sigma\subset k\bZ_2 * H, 
  \end{equation*}
  finishing the proof.
\end{proof}

\pf{pr.free}
\begin{pr.free}
  We will reduce the problem to the case $H=K$ (and identical jointly IF families) as follows.

  Embed $H$ and $K$ into $H\otimes K$ in the obvious fashion, and restrict the single (jointly IF, by \Cref{pr.tens}) family
  \begin{equation*}
    \pi_i\otimes \pi_j:H\otimes K\to A_i\otimes B_j
  \end{equation*}
  to $H$ and $K$ to recover $\pi_i$ and $\pi_j$ respectively.

  Since we now have an embedding
  \begin{equation*}
    H * K \subset (H\otimes K) * (H\otimes K)
  \end{equation*}
  compatible with the $\pi$ map families in the sense that $(\pi_i\otimes \pi_j)^{*2}$ restricts to $\pi_i * \pi_j$, in order to prove the joint IF property for the latter it suffices to do so for the former. But this, in turn, follows from the joint IF-ness of $\pi_i\otimes \pi_j$ and \Cref{cor.hh} applied to this the Hopf algebra $H\otimes K$ and this single family $(\pi_i\otimes \pi_j)_{i,j}$.
\end{pr.free}

\section{Other results in the literature}\label{se.other}

The preceding material sheds some light on recent results in the same spirit. In \cite{ban-tor}, for instance, the main problem being discussed is whether compact quantum groups are generated by their tori (see e.g. \cite[Conjecture 2.3]{ban-tor} as well as \cite[Introduction]{ban-pat} or \cite{bbd,bv-inv}):

\begin{definition}\label{def.gen-tor}
  Let $H$ be a Hopf algebra, regarded as an algebra of functions on a quantum group $G$. We say that $G$ is {\it generated by its tori} if the family of surjections from $H$ onto its quotient group Hopf algebras $H\to k\Gamma$ is jointly IF. 
\end{definition}

In the context of ``CQG algebras'' (i.e. cocommutative Hopf algebras with positive unital integral; \cite{dk}) \cite[Proposition 4.5]{ban-tor} reads

\begin{proposition}\label{pr.gen-tor}
If the quantum groups attached to the Hopf algebras $H$ and $K$ are generated by their tori then so is the quantum group associated to the free product $H*K$.   
\end{proposition}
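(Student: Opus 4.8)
The plan is to derive \Cref{pr.gen-tor} directly from \Cref{pr.free} together with the characterization of torus generation in \Cref{def.gen-tor}. The key observation is that, for a Hopf algebra $H$ (here a CQG/cocommutative Hopf algebra), torus generation is by definition the statement that the family of surjections $\pi_\Gamma:H\to k\Gamma$ onto the group Hopf algebra quotients is jointly IF, so \Cref{pr.free} immediately tells us that the family $\pi_\Gamma * \pi_\Lambda : H*K\to k\Gamma * k\Lambda$, indexed by pairs $(\Gamma,\Lambda)$ of such quotients, is jointly IF. The remaining work is to translate this into the statement that the surjections from $H*K$ onto \emph{its own} group Hopf algebra quotients are jointly IF.

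First I would record that $k\Gamma * k\Lambda \cong k(\Gamma * \Lambda)$ as Hopf algebras (free product of group algebras is the group algebra of the free product of the groups), so the family provided by \Cref{pr.free} is a family of surjections of $H*K$ onto group Hopf algebras. Next, the crucial step is to note that every group Hopf algebra quotient $H*K\to k\Delta$ factors through one of the form $k\Gamma*k\Lambda$ for suitable quotient groups $\Gamma$ of $H$ and $\Lambda$ of $K$: indeed, precomposing $H*K\to k\Delta$ with the two structural inclusions $H\hookrightarrow H*K$ and $K\hookrightarrow H*K$ gives algebra maps $H\to k\Delta$ and $K\to k\Delta$ whose Hopf images are group Hopf algebras $k\Gamma$ and $k\Lambda$ (a Hopf quotient of a CQG-type, in particular cocommutative, Hopf algebra which is also a quotient-as-algebra of a group algebra is again a group Hopf algebra), and by the universal property of the free product the map $H*K\to k\Delta$ factors as $H*K\to k\Gamma * k\Lambda \to k\Delta$. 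This exhibits every group-quotient of $H*K$ as dominated (in the lattice of Hopf quotients) by one in our distinguished family.

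With that factorization in hand the conclusion is formal: if the family of all group Hopf algebra quotients $H*K\to k\Delta$ factored simultaneously through some proper Hopf quotient $H*K\to \overline{H*K}$, then a fortiori the subfamily $\{\pi_\Gamma * \pi_\Lambda\}$ would too, contradicting the joint IF-ness supplied by \Cref{pr.free}. Hence $H*K$ is generated by its tori.

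I expect the main obstacle to be the bookkeeping in the middle step, namely verifying cleanly that the Hopf image of an algebra map $H\to k\Delta$ out of a cocommutative (CQG) Hopf algebra is always itself a group Hopf algebra — equivalently that group Hopf algebras are stable under passing to Hopf images inside them. For genuine group algebras $k\Gamma$ this is the statement that a Hopf subalgebra of $k\Delta$ containing the image of $\Gamma$ and stable under the relevant operations is spanned by a subgroup of $\Delta$; in the CQG setting one argues with the cocommutative (hence ``group-like-generated'' in the appropriate completed sense) structure. Once this is granted, the rest is a direct invocation of \Cref{pr.free} and a diagram chase through the universal property of $*$, so no further serious calculation is needed.
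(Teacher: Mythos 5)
Your proof is correct and, at its core, is the same one-line argument the paper gives (the paper's entire proof reads ``This is an immediate consequence of \Cref{pr.free}''). However, you have built in a detour that you yourself flag as the ``main obstacle,'' and it is worth pointing out that it is not needed at all. By \Cref{def.gen-tor}, torus generation of $H*K$ means that the family of \emph{all} surjections onto group Hopf algebra quotients is jointly IF. \Cref{pr.free} hands you a jointly IF subfamily $\pi_\Gamma * \pi_\Lambda : H*K \to k\Gamma * k\Lambda \cong k(\Gamma*\Lambda)$ of that family, and enlarging a jointly IF family trivially preserves joint IF-ness: if the full family factored through a proper Hopf quotient, so would the subfamily. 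This is exactly the ``a fortiori'' sentence in your last paragraph, which stands on its own. The middle step --- that every group Hopf algebra quotient $H*K\to k\Delta$ is dominated by some $k\Gamma * k\Lambda$, via the claim that Hopf images of maps out of a CQG algebra into a group algebra are again group algebras --- would only be relevant for the reverse implication (deducing joint IF-ness of the subfamily from that of the full family), which is not the direction you need. The only genuinely necessary observation beyond \Cref{pr.free} is the identification $k\Gamma * k\Lambda \cong k(\Gamma * \Lambda)$, which ensures the subfamily really does consist of group Hopf algebra quotients of $H*K$.
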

\begin{proof}
This is an immediate consequence of \Cref{pr.free}.   
\end{proof}

In the same spirit, \cite[Theorem 3.4]{chi-gen} shows that for a compact connected Lie group $G$ with Hopf algebra of representative functions $\cO(G)$ the family of morphisms
\begin{equation*}
  \bC[t,t^{-1}]*\cO(G)\to \bC[t,t^{-1}]*\cO(T)
\end{equation*}
is jointly IF for $T$ ranging over the maximal tori $T$. It too is thus a consequence of \Cref{pr.free}. 

As noted before (e.g. in \Cref{re.joint}) working with families of Hopf algebra maps was crucial in much of the discussion above. We can, however, recover some of the results in weakened form.

As an example, consider \cite[Proposition 7.4]{bb-inner} as our starting point. It deals with single maps (i.e. the families are singletons) and shows that the tensor product of two IF algebra morphisms is again IF provided one of the maps is actually faithful and the respective Hopf algebra has injective antipode. We have an analogous result for families:

\begin{proposition}
  Let
  \begin{equation*}
    \pi_i:H\to A_i,\ i\in I\quad\text{and}\quad \pi_j:K\to B_j,\ j\in J
  \end{equation*}
  be families of algebra morphisms with ${\pi_i}$ jointly faithful, $\{\pi_j\}$ jointly IF, and $H$ having injective antipode. Then, $\{\pi_i\otimes\pi_j\}_{i,j}$ is jointly IF. 
\end{proposition}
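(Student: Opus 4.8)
The plan is to follow the template of the proof of \Cref{pr.tens}, but to compensate for the fact that the $\pi_i,\pi_j$ here are only algebra morphisms --- so that the letter-insertion device of \Cref{re.joint} is unavailable --- by arguing one word-shape at a time and using the injectivity of the antipode $S$ of $H$ in its place. By \Cref{re.joint} (which applies to families of algebra maps), joint inner faithfulness of $\{\pi_i\otimes\pi_j\}$ is the same as
\[
  \bigcap_{{\bf w}\in L_{I\times J}}\ker\pi^{I\times J}_{\bf w}=0 .
\]
Call the \emph{shape} of a word its length together with the pattern of barred versus unbarred letters. Exactly as in the last lines of the proof of \Cref{pr.tens}, each ${\bf w}\in L_{I\times J}$ determines a unique pair $({\bf i},{\bf j})$ with ${\bf i}\in L_I$ and ${\bf j}\in L_J$ of the same shape, for which $\pi^{I\times J}_{\bf w}$ is --- up to a harmless permutation of tensorands in the codomain --- the map $\pi^I_{\bf i}\otimes\pi^J_{\bf j}\colon H\otimes K\to A_{\bf i}\otimes B_{\bf j}$; and conversely every such pair occurs. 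So it suffices to show
\[
  \bigcap_{\substack{{\bf i}\in L_I,\ {\bf j}\in L_J\\ \text{of the same shape}}}\ker\bigl(\pi^I_{\bf i}\otimes\pi^J_{\bf j}\bigr)=0 .
\]

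Suppose $x\ne 0$ lies in this intersection; write $x=\sum_\alpha h_\alpha\otimes y_\alpha$ with $\{h_\alpha\}\subset H$ and $\{y_\alpha\}\subset K$ linearly independent, and let $V,W$ be their (finite dimensional) spans. For any ${\bf i},{\bf j}$ the restriction of $\ker(\pi^I_{\bf i}\otimes\pi^J_{\bf j})$ to $V\otimes W$ is $(V\cap\ker\pi^I_{\bf i})\otimes W+V\otimes(W\cap\ker\pi^J_{\bf j})$, so $x$ belongs to the intersection of these rectangular subspaces of $V\otimes W$ over all same-shape pairs. Organising the pairs by the shape $\sigma$ of ${\bf j}$ and intersecting first over all ${\bf i}$ of shape $\sigma$, the elementary identity $\bigcap_t\bigl(U_t\otimes W+N\bigr)=\bigl(\bigcap_t U_t\bigr)\otimes W+N$ --- valid for $U_t\subseteq V$ and a fixed subspace $N=V\otimes W'$ --- together with the key claim $\bigcap_{{\bf i}\text{ of shape }\sigma}\ker\pi^I_{\bf i}=0$ (proved below) collapses this to $V\otimes(W\cap\ker\pi^J_{\bf j})$. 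Hence $x\in\bigcap_{{\bf j}\in L_J}V\otimes(W\cap\ker\pi^J_{\bf j})=V\otimes\bigl(W\cap\bigcap_{{\bf j}\in L_J}\ker\pi^J_{\bf j}\bigr)$, using that $V\otimes(-)$ commutes with intersections of subspaces of $W$ since $V$ is finite dimensional; but $\bigcap_{{\bf j}\in L_J}\ker\pi^J_{\bf j}=0$ because $\{\pi_j\}$ is jointly inner faithful (\Cref{re.joint} once more), so $x\in V\otimes 0=0$, a contradiction.

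It remains to justify $\bigcap_{{\bf i}\text{ of shape }\sigma}\ker\pi^I_{\bf i}=0$ for every shape $\sigma$. Since $\pi^I_{\bf i}=(\pi_{i_1}\otimes\cdots\otimes\pi_{i_k})\circ\Delta^{({\bf i})}$ and the map $\Delta^{({\bf i})}$ depends only on $\sigma$, this intersection is the $\Delta^{({\bf i})}$-preimage of $\bigcap_{(i_1,\dots,i_k)\in I^k}\ker(\pi_{i_1}\otimes\cdots\otimes\pi_{i_k})$. After replacing each $A_i$ by $\pi_i(H)$ (which changes neither the kernels nor any hypothesis) the kernel of a tensor product of surjections is computed slotwise, so this inner intersection equals a sum of terms each having the factor $\bigcap_{i\in I}\ker\pi_i=0$, where the vanishing is the joint faithfulness of $\{\pi_i\}$; hence it is $0$. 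Finally $\Delta^{({\bf i})}$ is injective: it is the iterated comultiplication $\Delta^{(k)}$ --- visibly injective --- followed by a tensor product of copies of $\id_H$ and of $S$, which is injective precisely because $S$ is. Thus $\bigcap_{{\bf i}\text{ of shape }\sigma}\ker\pi^I_{\bf i}=(\Delta^{({\bf i})})^{-1}(0)=0$. This is the single place where the injectivity of the antipode of $H$ is used, and, as \Cref{re.joint} suggests, it is exactly what substitutes for the ability to enlarge words.

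I expect the only real difficulty to be organisational: recognising that for families of mere algebra morphisms the combined intersection of string-map kernels cannot be simplified by padding words and so must be handled shape by shape, and that within a fixed shape everything hinges on the injectivity of $\Delta^{({\bf i})}$ --- equivalently of $S$. The remaining manipulations of kernels of tensor products and of rectangular subspaces of $V\otimes W$ are routine and already implicit in the proofs of \Cref{le.filter} and \Cref{pr.tens}.
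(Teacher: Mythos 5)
Your proof is correct, and it rests on the same three pillars as the paper's: pairing words of $L_I$ and $L_J$ of matching star-pattern to realize $\pi^{I\times J}_{\bf ij}$ as $\pi^I_{\bf i}\otimes\pi^J_{\bf j}$, the injectivity of $\Delta^{({\bf i})}$ deduced from that of $S$, and the slot-by-slot use of joint faithfulness of $\{\pi_i\}$. The execution is dual, though. The paper first replaces $\{\pi_i\}$ by its closure under finite products (harmless, since tensoring commutes with finite products) so that a \emph{single} index $i$ preserves the linear independence of any given finite set; it then decomposes $x=\sum e_s\otimes f_s$ asymmetrically ($e_s$ independent, $f_s$ merely nonzero), picks ${\bf j}$ with some $\pi_{\bf j}(f_s)\ne 0$, and builds an explicit witness ${\bf i}$ of the same shape slot by slot. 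You instead avoid the product-closure reduction and the witness construction entirely, proving the stronger-looking statement $\bigcap_{{\bf i}\text{ of shape }\sigma}\ker\pi^I_{\bf i}=0$ and then collapsing the intersection of the rectangular subspaces $(V\cap\ker\pi^I_{\bf i})\otimes W+V\otimes(W\cap\ker\pi^J_{\bf j})$ of $V\otimes W$ by elementary lattice identities. Both of your two quick assertions that deserve a line of justification --- that $\bigcap_{(i_1,\dots,i_k)\in I^k}\ker(\pi_{i_1}\otimes\cdots\otimes\pi_{i_k})$ is computed slotwise, and that $\bigcap_t(U_t\otimes W+N)=(\bigcap_t U_t)\otimes W+N$ --- are true (the former by induction on $k$, passing to the quotient by $\ker\pi_{i_1}\otimes H^{\otimes(k-1)}$), and neither requires filteredness, which is why you never need \Cref{le.filter} or its analogue. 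What your route buys is uniformity (no preprocessing of the family $\{\pi_i\}$); what the paper's buys is an explicit single word ${\bf ij}$ detecting $x$, in the spirit of \Cref{pr.tens}.
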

\begin{proof}
  Note that since tensor products of algebras commute with finite products in each tensorand we may as well assume that $\{\pi_i\}$ is closed under finite products, i.e. every finite product
  \begin{equation*}
    \prod_{i}\pi_i : H\to \prod_i A_i
  \end{equation*}
is again in our family. Together with the joint faithfulness this will ensure that for every finite linearly independent set $F\subset H$ there is some $i\in I$ such that $\pi_i(F)\subset A_i$ is linearly independent. 

Let $x\in H\otimes K$ be a non-zero element and expand it as
\begin{equation*}
  x=\sum e_s\otimes f_s
\end{equation*}
where
\begin{itemize}
\item $e_s\in H$ are finitely many linearly independent elements;
\item $f_s\in K$ are non-zero. 
\end{itemize}
Because $\{\pi_j\}_{j}$ is jointly IF there is some ${\bf j}\in L_J$ such that $\pi_{\bf j}(f_s)\ne 0$ in $B_{\bf j}$ for at least one $s$. Writing
\begin{equation*}
  {\bf j}=j_1\cdots j_k,\ j_t\in J\cup J^*,
\end{equation*}
we will be done once we argue that there is some
\begin{equation*}
 {\bf i}=i_1\cdots i_k \in L_I 
\end{equation*}
such that 
\begin{enumerate}[(a)]
\item\label{item:1} $i_t\in I$ if and only if $j_t\in J$ (and similarly for $I^*$ and $J^*$ respectively);
\item $\pi_{\bf i}(e_s)\in A_{\bf i}$ are linearly independent;
\end{enumerate}
indeed, in that case $\pi^{I\times J}_{\bf ij}$ defined as in \Cref{eq:7,eq:8} will fail to annihilate the arbitrary non-zero element $x\in H\otimes K$.

To see that ${\bf i}\in L_I$ with the requisite properties exists note first that all maps $\Delta^{({\bf i})}$ as in \Cref{eq:10} are injective by our assumption that the antipode of $H$ is. Now simply choose $i_t$, $1\le t\le k$ so as to ensure that for every $t$, $\pi_{i_t}:H\to A_{i_t}$ is one-to-one on the finite-dimensional span of the position-$t$ tensorands of the elements
\begin{equation*}
  \Delta^{({\bf i})}(e_s)\in H_{\bf i}. 
\end{equation*}
First off this makes sense because condition \labelcref{item:1} above means that $H_{\bf i}$ and $\Delta^{({\bf i})}$ are uniquely defined by ${\bf j}$, regardless of the specific choice of $i_t$. The fact that the $i_t$ can then be chosen as described in the preceding paragraph then follows from our assumptions of joint faithfulness and closure of $\{\pi_i\}_i$ under products.
\end{proof}

As noted above, this in particular recovers the case of singleton families of maps treated in \cite[Proposition 7.4]{bb-inner}.



\bibliographystyle{plain}
\addcontentsline{toc}{section}{References}

\def\polhk#1{\setbox0=\hbox{#1}{\ooalign{\hidewidth
  \lower1.5ex\hbox{`}\hidewidth\crcr\unhbox0}}}

\Addresses

\end{document}